\documentclass{gSTA2e}

\usepackage{amsfonts,amssymb,mathtools,amsthm,bm}
\usepackage{graphicx, subcaption}
\usepackage[OT1]{fontenc}
\usepackage[numbers]{natbib}
\usepackage[inline]{enumitem}
\usepackage[colorlinks,citecolor=blue,urlcolor=blue]{hyperref}
\usepackage[dvipsnames]{xcolor}


\theoremstyle{plain}
\newtheorem{theorem}{Theorem}[section]

\newtheorem{proposition}[theorem]{Proposition}

\theoremstyle{definition}
\newtheorem{definition}[theorem]{Definition}
\newtheorem{example}[theorem]{Example}

\theoremstyle{remark}
\newtheorem{remark}[theorem]{Remark}

\def\eqns#1{\begin{equation*}#1\end{equation*}}
\def\eqnl#1#2{\begin{equation}\label{#1}#2\end{equation}}
\def\eqnsa#1{\begin{subequations}\begin{align*}#1\end{align*}\end{subequations}}
\def\eqnmla#1#2{\begin{subequations}\label{#1}\begin{align}#2\end{align}\end{subequations}}


\def\one{\mathbf{1}}
\def\bsc{\bm{c}}
\def\boE{\mathbf{E}}
\def\boF{\mathbf{F}}

\def\bsn{\bm{n}}
\def\bsN{\bm{N}}
\def\boN{\mathbf{N}}

\def\boP{\mathbf{P}}
\def\boX{\mathbf{X}}
\def\bfx{\mathbf{x}}
\def\boY{\mathbf{Y}}
\def\bfy{\mathbf{y}}
\def\bsPi{\bm{\Pi}}	
\def\bsrho{\bm{\rho}}


\def\calB{\mathcal{B}}

\def\calE{\mathcal{E}}
\def\calF{\mathcal{F}}

\def\calN{\mathcal{N}}
\def\calP{\mathcal{P}}
\def\calS{\mathcal{S}}
\def\calT{\mathcal{T}}
\def\calX{\mathcal{X}}
\def\calY{\mathcal{Y}}

\def\bscalX{\bm{\mathcal{X}}}
\def\bscalY{\bm{\mathcal{Y}}}

\def\bbE{\mathbb{E}}

\def\bbN{\mathbb{N}}
\def\bbP{\mathbb{P}}
\def\bbR{\mathbb{R}}


\def\frakF{\mathfrak{F}}
\def\frakM{\mathfrak{M}}
\def\frakN{\mathfrak{N}}
\def\frakY{\mathfrak{Y}}

\def\a{\mathrm{a}}
\def\d{\mathrm{d}}


\DeclareMathOperator{\Sym}{Sym}

\DeclareMathOperator{\dom}{dom}

\DeclareMathOperator{\cov}{cov}

\def\AND{\qquad\mbox{ and }\qquad}
\def\bij{\leftrightarrow}
\def\defeq{\doteq}
\def\ET{,\;\;}
\def\frall#1{(#1)\qquad}
\def\given{\,|\,}
\def\iff{\Leftrightarrow}
\def\implies{\Rightarrow}

\def\ind#1{\one_{#1}}

\def\Liff{\Leftrightarrow}

\def\longbij{\longleftrightarrow}
\def\lwedge{\;\wedge\;}

\def\sfrall#1{(#1)\;\;}
\def\st{\;\,\mbox{s.t.}\;\,}
\def\relpres#1{\stackrel{\sim}{#1}}
\def\inx{\bullet}
\def\eqY{\approx}
\def\SeqY{/\!\eqY}

\bibliographystyle{gSTA}

\begin{document}

\title{On a representation of partially-distinguishable populations}

\author{
\name{Jeremie Houssineau\textsuperscript{a}$^{\ast}$\thanks{$^\ast$Corresponding author. Email: jeremie.houssineau@warwick.ac.uk}
and Daniel E. Clark\textsuperscript{b}}
\affil{\textsuperscript{a}Department of Statistics, University of Warwick;
\textsuperscript{b}D\'epartement Communications, Images et Traitement de l'information, Telecom SudParis}
\received{12 January 2018}
}

\maketitle

\begin{abstract}
A representation of heterogeneous stochastic populations that are composed of sub-populations with different levels of distinguishability is introduced together with an analysis of its properties. It is demonstrated that any instance of this representation where individuals are independent can be related to a point process on the set of probability measures on the individual state space. The introduction of the proposed representation is fully constructive which ensures the meaningfulness of the approach.
\end{abstract}

\begin{keywords}
Stochastic population; Point process; Distinguishability
\end{keywords}

\begin{classcode}
60A10; 62C10
\end{classcode}

\section{Introduction}

Stochastic populations such as probabilistic multi-object systems are of central importance in diverse applications areas, such as Systems Biology \cite{Chenouard2014}, Robotics \cite{Mullane2011} or Computer Vision \cite{Okuma2004}. In some cases, the sole interest for the practitioner is in their global characteristics, for instance in applications where only the cardinality of the considered population matters as in population dynamics \cite{Hofbauer1998,Turchin2003} or where refined spatial information is not necessarily required. In some other cases, all the individuals of the population can be clearly identified and thus the way the population is represented becomes less important; indeed, problems of this type can be recast into a collection of individual elementary representations. Except in these specific cases, the representation of stochastic populations remains mostly unexplored, in spite of their ubiquity. In general, the population may be only partially distinguishable, i.e.\ some individuals may be identified while some sub-populations might only be described by global non-specific representations such as their cardinality. The objective in this article is to find a natural way of representing these stochastic partially-distinguishable populations. The underlying motivation is that a natural representation should not only be useful in theory when expressing different results and properties, but also in practice when devising approximations for the induced probability distributions. Figure~\ref{fig:distinguishability} shows examples of samples drawn for distributions with different degrees of distinguishability, hinting at the possible drawbacks of using indistinguishable representations for distinguishable populations.

One of the main application areas for the type of representation introduced in this article is in the domain of 
multi-target tracking, which arose from military surveillance radar problems where operators are required to detect, locate, and track multiple potential threats in order to take appropriate countermeasures. Practical solutions to this problem began to emerge in the late 1970s and early 1980s via constructive practical engineering approaches to the problem~\cite{reid1979algorithm,fortmann1980multi}. In the mid-1980s, more fundamental mathematical approaches to the problem were explored, including formulations with point processes~\cite{washburn1987random} and random sets~\cite{mori1984multitarget,mori1986tracking}. Holistic Bayesian approaches were developed further by Portenko~\cite{portenko1997optimal} and Goodman et al.\ \cite{goodman1997mathematics}, and Stone et al.\ \cite{stone1999bayesian}. Up until the 2000s, approaches based on MHT (Multiple Hypothesis Tracking) or JPDA (Joint Probabilistic Data Association) dominated practical applications.  Following this, new approaches based on point processes began to emerge \cite{kreucher2005multitarget}, including solutions showing the equivalence of random set and point process formalisms \cite{vo2005sequential}. More recently, methods for labelled solutions have been developed~\cite{vo2013labeled,papi2015generalized}. One of the limitations of point processes in this context is that they were not originally designed to represent and propagate specific information about individual targets.

As is usual in the target tracking literature, we call a \emph{track} any given set of attributes that allows for characterising a potential individual in the population, e.g.\ a probability distribution on a pre-specified state space together with the sequence of observations that have been assigned to this individual. Since identifying particular individuals is often the practical objective, heuristics are applied to the output of point-process-based algorithms in order to produce tracks. However, since tracks themselves are often not only displayed to the operator but also used for further processing steps, the addition of an ad-hoc step at this stage of the algorithm may prevent them from performing these steps in a holistic and integrated way. For instance, specific data assimilation \cite{Houssineau2015} based on the proposed representation can be easily extended to include classification \cite{Pailhas2016} or sensor management \cite{Delande2014_SensorControl}. Recent applications of the proposed approach by the authors include space situational awareness \cite{Delande2016_Space}, harbour surveillance \cite{Pailhas2016} as well as multi-target tracking from radar data \cite{Houssineau2015_SMC}.

In order to build a natural representation of stochastic populations, it is convenient to start with an idealistic case in which the notion of partial distinguishability can be formalised, and so is done in Section~\ref{sec:generalPopRep}. The concepts and notations introduced in Section~\ref{sec:generalPopRep} are then used as a basis for the introduction of a full representation in Section~\ref{ssec:popRep}. An alternative formulation is finally introduced in Section~\ref{sec:altFormulations}, where simplifications are made in order to make the representation more practical.

\begin{figure}
\centering
\def\svgwidth{.95\textwidth}
\scriptsize
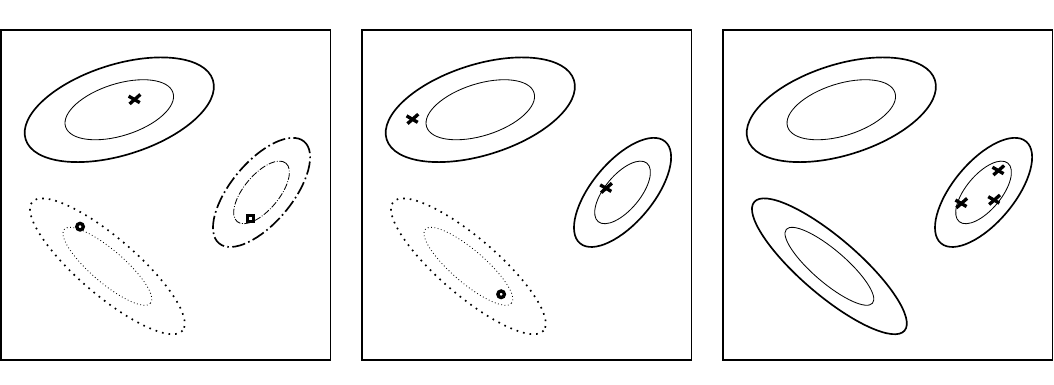
\caption{Distributions (lines) and samples (markers) for different degrees of distinguishability where different markers and line styles are used to emphasize distinguishability: (a) fully distinguishable, (b) partially distinguishable and (c) indistinguishable. The samples in (c) show a drawback of using indistinguishable representations for distinguishable populations, i.e.\ there is no guarantee that the individual samples will come from different modes, as opposed to (a)}
\label{fig:distinguishability}
\end{figure}

Throughout the article, random variables will be implicitly assumed to be defined on the complete probability space $(\Omega,\Sigma,\bbP)$. For any set $A$, denote $\bsPi(A)$ the set of equivalence relations on $A$, and denote $O$ and $I$ the minimal and maximal equivalence relations respectively, i.e.\ $xIy$ is true for any $x,y \in A$ and $xOy$ is false for any $x,y \in A$ such that $x \neq y$.

\section{Describing a population}
\label{sec:generalPopRep}

We consider a \emph{representative set}\footnote{The main notations introduced in this article as well as their meaning are listed in Appendix~\ref{sec:notation}} $\bscalX_{\a}$, i.e.\ a set in which individuals of interest can be uniquely characterised. Because of this characterisation, a population, which can be intuitively understood as a collection of individuals, is formally defined as a subset of $\bscalX_{\a}$. The set $\bscalX$ of all possible populations is then defined as the set of all countable subsets of $\bscalX_{\a}$. In this way, the set $\bscalX$ is itself a representative set for populations. 

An important aspect is that in practice, a more realistic set $\boX$ needs to be considered for the representation of individuals, and $\boX$ is assumed to be a Polish space equipped with its Borel $\sigma$-algebra $\calB(\boX)$. We define $\phi : \bscalX_{\a} \to \boX$ as the projection map relating states in the representative set $\bscalX_{\a}$ with simplified states in $\boX$. Such a simplification is required for most of the applications since the full characterisation of an individual is not usually considered accessible. For instance, the observation may not account for the shape, mass or composition of a given solid, so that only its centre of mass/volume can be inferred; in this case, a point in $\bscalX_{\a}$ might take the form $(\mathbf{s},\mathbf{m},\bfx)$ with $\mathbf{s}$ characterising the shape, $\mathbf{m}$ being the mass and $\bfx$ being the centre of mass, and points in $\boX$ might only describe the latter so that $\phi: (\mathbf{s},\mathbf{m},\bfx) \mapsto \bfx$. 

One of the consequences of the simplified representation described above is that individuals might have the same state in $\boX$. In the context of point process theory \citep{Daley2003}, processes that never have two individuals at the same point are called \emph{simple}. Adopting this term, we aim to propose a representation that does not require \emph{simplicity} in $\boX$ in general. We assume that the set $\boX$ can be written as the union of an Euclidean space $\boX^{\inx}$ and an \emph{isolated point} $\psi$. The latter can be viewed as an \emph{empty state} and is used to provide an image to individuals that cannot be represented on $\boX^{\inx}$ such as individuals that are outside of the zone of interest. A practical example of the meaning of the sets introduced so far is given in Figure~\ref{fig:population}.

\begin{figure}
\centering
\def\svgwidth{.3\textwidth}
\scriptsize
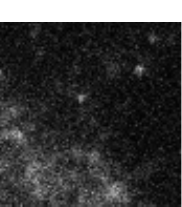
\caption{Image of proteins obtained by Fluorescence Microscopy, where the 3 individual proteins highlighted by small circles constitute an example of population $\calX \in \bscalX$. The set $\boX$ might, for instance, describe only the position of the proteins in the image. For simplicity, the image itself is assumed to characterise the individual proteins by their position and shape and can thus be thought as an element of $\bscalX_{\a}$.}
\label{fig:population}
\end{figure}

The formal definition of indistinguishability considered here differs from the definition of indistinguishable particles in Statistical Mechanics \cite{Tuckerman2010},  though the authors note that analogies between concepts from particle physics for applications in multi-target tracking can be insightful, see e.g.\ \cite{Koch2018}. 
In a multi-particle system, particles of the same species cannot be distinguished by measuring their physical properties such as electric charge or mass since these are exactly equal within a given species; these particles are then said to be indistinguishable. In order to state our definition of indistinguishability, we introduce an observation space $\boY$. In the context of target tracking, the observation space is the space in which the sensor measurements can be expressed, e.g.\ the distance and angle between the target and the sensor for a radar. The following definition introduces what is referred to as strongly indistinguishable individuals in this work.

\begin{definition}
\label{def:strongIndist}
Let $\calX \in \bscalX$ be a set of individuals, let $\boY$ be a given observation space and let $(Y_x)_{x \in \calX}$ be a collection of random variables on $\boY$ characterising the observation process of these individuals, then two individuals $x,x' \in \calX$ are said to be strongly indistinguishable if and only if $Y_x = Y_{x'}$ almost surely.
\end{definition}

Strongly indistinguishable individuals can be related through an equivalence relation $\tau \in \bsPi(\calX)$ defined as follows: $x \tau x'$ holds if and only if the two individuals $x,x' \in \calX$ are strongly indistinguishable. The set
\eqns{
\bscalY \defeq \{(\calX,\tau) \st \calX \in \bscalX \ET \tau \in \bsPi(\calX) \}
}
is introduced in order to represent partially-indistinguishable populations, where $\defeq$ emphasises that the left hand side of the equation is defined as being equal to the right hand side. When individuals are not strongly indistinguishable, they are said to be \emph{weakly distinguishable}. Using the notations of Definition~\ref{def:strongIndist}, two weakly distinguishable individuals $x,x' \in \calX$ are such that $\bbP(Y_x \neq Y_{x'}) > 0$, that is, there is a positive probability to obtain different observations for these individuals.

Even when some individuals are weakly distinguishable, it could happen that the available information is not sufficient to tell them apart. We then say that these individuals are \emph{weakly indistinguishable}. This concept clearly depends on the knowledge about the population and might evolve if additional information is made available. A more formal definition of weak indistinguishability will be given later, but this concept can already be illustrated in the context of Bayesian inference as in the following example.

\begin{example}
\label{ex:distAPriori}
Assuming for simplicity that all the individuals are weakly distinguishable, i.e.\ $\tau = O$, it is often the case in practice that individuals are not distinguished a priori: if $(S_x)_{x \in \calX}$ is a collection of random variables describing the state of the individuals on $\boX$, then for any two individuals $x,x' \in \calX$, it holds that $\bbP(S_x \in B, S_{x'} \in B') = \bbP(S_x \in B', S_{x'} \in B)$ for any measurable subsets $B,B' \in \calB(\boX)$. If a distinct observation $\bfy_x \in \boY$ is received for each individual $x \in \calX$ and if the corresponding likelihood $\ell$ verifies $\ell(\bfy_x \given \cdot) \neq \ell(\bfy_{x'} \given \cdot)$ for any $x,x' \in \calX$ (this can be seen as a form of identifiability) then all the individuals will be distinguished a posteriori.
\end{example}

In multi-target tracking, the main reason for individuals to be strongly indistinguishable is when they share the same state in $\boX$. For instance, the state $\psi$ introduced above can be used to describe any individual that would be outside of the field of view of the sensor collecting observations. These individuals are indeed strongly indistinguishable since they almost surely have no observation associated to them. A typical scenario where individuals might be weakly indistinguishable is when the sensor is likely to collect observations associated with several individuals at once (e.g., because of their close proximity), in this case, there might a be a positive probability that the individuals will eventually be observed separately.

\begin{remark}
Modelling some or all individuals in a population as strongly indistinguishable can also be seen as a simplifying assumption. Indeed, in this case, nothing specific can be learned about these individuals, which can have methodological and computational benefits. This is typically the case in point-process applications where only the population as a whole matters.
\end{remark}

The description of the uncertainty on a given population $\calX \in \bscalX$ can be performed by associating every individual in $\calX$ with a random variable on $\boX$ as in Example~\ref{ex:distAPriori}. This solution, however, does not describe the relation between the different probability distributions related to different individuals, in particular with strongly indistinguishable ones. A global representation of uncertainty is thus sought. One of the most usual ways of describing multiple spatial entities as a whole is given by the theory of point processes. However, this theory is built on the principle there is no interest in characterising specific individuals of the population \citep[p.~124]{Daley2003}. Yet, we wish to model the partially-indistinguishable nature of the individuals in $\calX$ without assuming that they are all strongly indistinguishable, i.e.\ without assuming that $\tau = I$. 

The study of populations composed of indistinguishable individuals is already challenging due to the difficulty in finding a consistent way of describing multiple individuals within a single stochastic object. Examples of questions arising from this issue are: Should the individuals be ordered even though there is no natural way of defining the order? Should the individuals be assumed to be represented at different points of the state space in order to enable a set representation? Should the population be assumed finite in order to proceed to the analysis? There are different ways of answering these questions and each way has to be proved equivalent in some sense to the others \citep{Moyal1962,Macchi1975}. The representation of partially indistinguishable populations raises many additional and equivalently difficult questions. Alternative representations of stochastic populations have to be found in order to tackle this issue.

\section{Representing a population}
\label{ssec:popRep}

Based on the set $\bscalX$ of all possible populations and on the set $\boX$ on which all individuals are represented, we describe a versatile way of introducing randomness in the states of the individuals in $\boX$ which conveys the concept of strong indistinguishability. This is first achieved for a fixed population in Section~\ref{ssec:givenPop} before tackling the full generality of the problem in Section~\ref{ssec:stochasticRepresentation}.

\subsection{For a given population}
\label{ssec:givenPop}

\subsubsection{Construction}

Let $\calY = (\calX,\tau) \in \bscalY$ be a partially-distinguishable population of interest, i.e.\ a set $\calX$ of individuals characterised in $\bscalX_{\a}$ that is equipped with an equivalence relation $\tau$ connecting strongly indistinguishable individuals. The objective is to include the relation between the individuals of $\calX$ in the probabilistic modelling of the population. We first introduce the set
\eqns{
\boF_{\calY} \defeq \big\{ f : \calX \to \boX \st |f^{-1}[\boX^{\inx}]| < \infty \big\}
}
that is composed of mappings $f : \calX \to \boX$ that map finitely many individuals to $\boX^{\inx}$. This condition facilitates the definition of various types of operations on individuals but can be relaxed without inducing major changes in the following results. The set $\calX$ is used as a way of indexing the states in $\boX$ and the actual knowledge of the full individual characteristics $x \in \calX$ is not used. Otherwise, the state of an individual $x \in \calX$ could be directly obtained from the projection $\phi(x) \in \boX$. At the end of this section, we will derive a a way of representing populations that ensures that $\bscalX$ cannot be used to hold information on the state of individuals. In the context of multi-target tracking, it can be convenient to model that infinitely many individuals are at the empty state $\psi$ since, in this case, there is no need to update the number of individuals at this state; see \cite{Houssineau2016_HISP} for more details.

A suitable $\sigma$-algebra of subsets of $\boF_{\calY}$, denoted $\calF^*_{\calY}$ can be introduced as follows: There is a natural topology on $\boF_{\calY}$ that is generated by open sets of the same form as
\eqns{
A = \{ f \st \sfrall{\forall x \in \calX} f(x) \in A_x \},
}
where $\{A_x\}_{x \in \calX}$ is a collection of open sets in $\boX$ that differs from $\{\psi\}$ for finitely many $x \in \calX$ only, i.e.\ $|\{x \in \calX \st A_x \neq \{\psi\} \}| < \infty$. Note that $\{\psi\}$ is indeed open as an isolated point. This topology is denoted $\calT^*_{\calY}$ and $\calF^*_{\calY}$ is defined as the corresponding Borel $\sigma$-algebra. Representations of the population $\calX$ can thus be given as random variables in the measurable space of mappings $(\boF_{\calY},\calF^*_{\calY})$. A random variable $\frakF$ from $(\Omega, \Sigma, \bbP)$ to $(\boF_{\calY},\calF^*_{\calY})$ represents all the individuals in $\calX$ on $\boX$ and is equivalent to a collection of possibly correlated random variables, since indistinguishability has not been taken into account yet.

When two individuals in $\calX$ are strongly indistinguishable, we expect that individual characterisations would not be available; that is, subsets in the $\sigma$-algebra associated with $\boF_{\calY}$ should not allow for evaluating events regarding a specific individual if it is strongly indistinguishable from other individuals. The space $(\boF_{\calY},\calF^*_{\calY})$ is then not fully satisfying as is does not ensure that indistinguishable individuals are well represented. A natural way of circumventing this incomplete representation of the structured population $\calY$ is to make the $\sigma$-algebra $\calF^*_{\calY}$ coarser by ``gluing'' together functions that distinguish indistinguishable individuals.

\begin{example}
\label{ex:gluing}
Suppose that $\calY = (\{x,x'\},I)$, i.e.\ $\calY$ is made of two indistinguishable individuals so that $\calX/\tau = \{ \{x,x'\} \}$. Additionally suppose that $\boX = \boX^{\inx} = \{\bfx,\bfx'\}$, i.e.\ there are only two possible states for the individuals $x$ and $x'$, and assume that $\boX$ is also representative so that $x$ and $x'$ must have different states in $\boX$. There are only $2! = 2$ different distinguishable outcomes $f,g$ in $\boF_{\calY}$ defined by their respective graph as $\{(x, \bfx), (x', \bfx')\}$ and $\{(x, \bfx'), (x', \bfx)\}$. To ensure that the individuals $x,x'$ are indistinguishable, one can glue together these two symmetrical outcomes and define a new set of functions as $\{ \{ f,g \} \}$ (note the additional curly brackets). There is now only one outcome $\{f,g\}$ that does not allow for distinguishing the individuals $x$ and $x'$ as required.
\end{example}

Following Example~\ref{ex:gluing} and denoting $\Sym(\calX,\tau)$ the subgroup of permutations on $\calX$ agreeing with the equivalence relation $\tau$, i.e.\ the ones permuting indistinguishable individuals only, we introduce a binary relation on $\boF_{\calY}$ as follows.

\begin{definition}
\label{def:inducedEqu}
A binary relation $\rho$ on $\boF_{\calY}$ is said to be induced by the equivalence relation $\tau$ if it holds that
\eqnl{eq:inducedEqu}{
\frall{\forall f,f' \in \boF_{\calY}} f\rho f' \iff \exists \sigma \in \Sym(\calX,\tau) ( f = f' \circ \sigma ).
}
\end{definition}

Intuitively, elements of $\boF_{\calY}$ are related through a binary relation whenever they only differ by a permutation of indistinguishable individuals. A representation of the elements of the quotient space $\boF_{\calY}/\rho$ is given in Figure~\ref{fig:mapping}. This binary relation can be proved to have additional properties.

\begin{figure}
    \centering
    \begin{subfigure}[b]{.9\textwidth}
        \centering
        \def\svgwidth{.8\textwidth}
        \scriptsize
        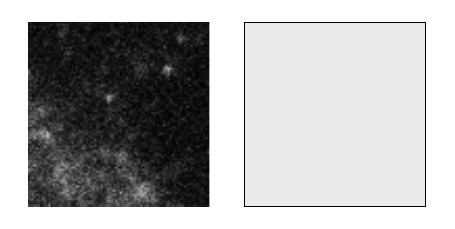
        \caption{Two functions $f$ (orange line) and $f'$ (blue dotted line) in $\boF_{\calY}$ erroneously allowing for the distinction of the two indistinguishable individuals in $\boX$ (circled in green).}
        \label{fig:mapping0}
    \end{subfigure}
    \begin{subfigure}[b]{.9\textwidth}
        \centering
        \def\svgwidth{.8\textwidth}
        \scriptsize
        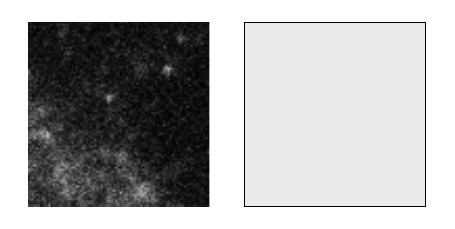
        \caption{Only the set of functions $\{f,f'\}$ is accessible in the quotient set $\boF_{\calY}/\rho$, as represented by the green line between the two indistinguishable individuals and their images.}
        \label{fig:mapping1}
    \end{subfigure} 
    \caption{The two individuals in the bottom left corner are assumed to be indistinguishable, which is incorrectly represented in Figure~\ref{fig:mapping0} and correctly represented in Figure~\ref{fig:mapping1}.}
    \label{fig:mapping}
\end{figure}

\begin{proposition}
\label{prop:def:inducedEqu}
The equivalence relation $\tau$ induces a unique binary relation on $\boF_{\calY}(\boX)$, and this binary relation is an equivalence relation.
\end{proposition}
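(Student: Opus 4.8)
The plan is to treat \eqref{eq:inducedEqu} as specifying the graph of $\rho$ directly and then to exploit the group structure of $\Sym(\calX,\tau)$. For \emph{uniqueness}, I would observe that \eqref{eq:inducedEqu} is a biconditional which, for every ordered pair $(f,f') \in \boF_{\calY} \times \boF_{\calY}$, settles whether $f\rho f'$ holds by the definite condition ``$\exists\,\sigma \in \Sym(\calX,\tau)$ such that $f = f' \circ \sigma$''. Hence the set $\{(f,f') : f\rho f'\}$ is forced to equal $\{(f,f') : \exists\,\sigma \in \Sym(\calX,\tau),\ f = f' \circ \sigma\}$, so any two relations induced by $\tau$ must coincide. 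This disposes of uniqueness and simultaneously exhibits the unique induced relation explicitly, leaving only its equivalence properties to check.

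Before verifying the axioms I would confirm that the defining condition is meaningful on $\boF_{\calY}$, i.e.\ that composition with an element of $\Sym(\calX,\tau)$ preserves the finiteness constraint defining $\boF_{\calY}$. For $f \in \boF_{\calY}$ and $\sigma \in \Sym(\calX,\tau)$ one has $(f \circ \sigma)^{-1}[\boX^{\inx}] = \sigma^{-1}\big[f^{-1}[\boX^{\inx}]\big]$, and since $\sigma$ is a bijection this set has the same finite cardinality as $f^{-1}[\boX^{\inx}]$; thus $f \circ \sigma \in \boF_{\calY}$ and $\rho$ is a genuine binary relation on $\boF_{\calY}$.

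The three axioms then follow directly from the fact that $\Sym(\calX,\tau)$ contains the identity, is closed under inverses, and is closed under composition. Reflexivity uses $f = f \circ \mathrm{id}$ with $\mathrm{id} \in \Sym(\calX,\tau)$. For symmetry, if $f = f' \circ \sigma$ then composing on the right with $\sigma^{-1}$ gives $f' = f \circ \sigma^{-1}$ with $\sigma^{-1} \in \Sym(\calX,\tau)$. For transitivity, if $f = f' \circ \sigma$ and $f' = f'' \circ \sigma'$ then $f = f'' \circ (\sigma' \circ \sigma)$ with $\sigma' \circ \sigma \in \Sym(\calX,\tau)$. Conceptually this is nothing more than the standard fact that the orbits $\{f \circ \sigma : \sigma \in \Sym(\calX,\tau)\}$ of $\Sym(\calX,\tau)$ acting on $\boF_{\calY}$ partition $\boF_{\calY}$, so that orbit membership is an equivalence relation.

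I do not anticipate a serious obstacle, since the statement is essentially a repackaging of the group axioms. The only two points deserving genuine attention are the well-definedness check in the second paragraph — that composition never leaves $\boF_{\calY}$ — and the tacit reliance on $\Sym(\calX,\tau)$ being closed under composition and inversion; the latter holds because the property ``$x\,\tau\,\sigma(x)$ for all $x$'' is preserved under these operations by the transitivity and symmetry of $\tau$, which is precisely what justifies calling $\Sym(\calX,\tau)$ a subgroup.
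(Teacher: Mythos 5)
Your proof is correct, and its core---deriving reflexivity, symmetry and transitivity from the presence of the identity, the existence of inverses, and the closure under composition in $\Sym(\calX,\tau)$---is exactly the paper's argument. The genuine difference is in the uniqueness step: you read \eqref{eq:inducedEqu} as a biconditional that determines the graph of $\rho$ outright, so that any two relations induced by $\tau$ coincide trivially; the paper instead fixes witnesses $\sigma,\sigma' \in \Sym(\calX,\tau)$ for the two relations $\rho,\rho'$ and invokes the existence of $\sigma'' \in \Sym(\calX,\tau)$ with $\sigma \circ \sigma'' = \sigma'$. Your version is the cleaner of the two: the paper's phrasing tacitly assumes that both $f\rho f'$ and $f\rho' f'$ hold before their equivalence has been established, whereas your observation---that a definition by a biconditional admits at most one extension---is immediate and avoids that awkwardness. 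You also verify something the paper leaves implicit, namely that precomposition by $\sigma$ does not leave $\boF_{\calY}$, since $(f \circ \sigma)^{-1}[\boX^{\inx}] = \sigma^{-1}\big[f^{-1}[\boX^{\inx}]\big]$ is finite whenever $f^{-1}[\boX^{\inx}]$ is; this well-definedness check is a small but real gap in the paper's write-up that your proof closes.
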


The proof of Proposition~\ref{prop:def:inducedEqu} relies mostly on the group nature of $\Sym(\calX,\tau)$, as a subgroup of $\Sym(\calX)$. Consequently, only the specific group properties of $\Sym(\calX,\tau)$ will be invoked when proving that the induced binary relation is an equivalence relation.

\begin{proof}
({\it Uniqueness}) Let $\rho$ and $\rho'$ be two binary relations induced by $\tau$. We want to prove that $f\rho f' \Liff f \rho' f'$ holds for any $f,f' \in \boF_{\calY}(\boX)$. Let $\sigma,\sigma'$ be the two permutations in $\Sym(\calX,\tau)$ satisfying \eqref{eq:inducedEqu} for $\rho$ and $\rho'$ respectively. There exists $\sigma''$ in $\Sym(\calX,\tau)$ such that $\sigma \circ \sigma'' = \sigma'$, proving the uniqueness. \\
({\it Reflexivity}) The identity is in $\Sym(\calX,\tau)$. \\
({\it Symmetry}) Existence of an inverse element in $\Sym(\calX,\tau)$. \\
({\it Transitivity}) Closure of $\Sym(\calX,\tau)$.
\end{proof}

Let $\rho$ denote the unique equivalence relation on $\boF_{\calY}$ induced by $\tau$ and let $\xi_{\rho}$ be the quotient map from $\boF_{\calY}$ to $\boF_{\calY}/\rho$ induced by $\rho$. We introduce a $\sigma$-algebra of subsets of $\boF_{\calY}$, denoted $\calF_{\calY}$, which does not allow for distinguishing strongly indistinguishable individuals: Let $\calT_{\calY}$ denote the initial topology on $\boF_{\calY}$ induced by the quotient map $\xi_{\rho}$. We can verify that $\calT_{\calY} \subseteq \calT^*_{\calY}$ holds, meaning that there are fewer open subsets in $\calT_{\calY}$ when compared to $\calT^*_{\calY}$. In other words, we define the topology $\calT_{\calY}$ on $\boF_{\calY}$ as the one inherited from the quotient set $\boF_{\calY}/\rho$. It would also be possible to define random variables directly on $\boF_{\calY}/\rho$ but this would force us to always work with sets of functions instead of working with the functions themselves. Denoting $\calF_{\calY}$ the Borel $\sigma$-algebra induced by $\calT_{\calY}$, we consider random variables from $(\Omega,\Sigma,\bbP)$ to $(\boF_{\calY},\calF_{\calY})$ which characterise subsets of indistinguishable individuals rather than individuals themselves, as required. Note that a reference measure on $(\boF_{\calY},\calF_{\calY})$ can be easily deduced from the reference measure on $\boX$, e.g.\ the Lebesgue measure when $\boX \subseteq \bbR^d$ for some $d > 0$.

\subsubsection{Independence and weak indistinguishability}

Now equipped with suitable spaces for considering the representation of partially-indistinguishable populations, we study the properties of probability measures on $(\boF_{\calY},\calF_{\calY})$. Since populations have an intrinsic multivariate nature, it is natural to introduce a notion of independence for probability measures on $\boF_{\calY}$ as in the following definition, where the variable of integration $\{\bfy_x\}_{x\in\calX}$ is a family of points in $\boX$ indexed by $\calX$ and where $x \mapsto \bfy_x$ is the mapping $f \in \boF_{\calY}$ (i.e.\ from $\calX$ to $\boX$) such that $f(x) = \bfy_x$ for any $x \in \calX$.

\begin{definition}
\label{def:independentIndividuals}
The individuals in $\calX$ are said to be \emph{independent} if the probability measure $P$ on $\boF_{\calY}$ verifies
\eqnl{eq:independentIndividuals}{
P(F) = \int \ind{F}\big(x \mapsto \bfy_x \big) \prod_{x \in \calX} p_x(\d \bfy_x)
}
for any $F \in \calF_{\calY}$, where $\{ p_x \}_{x \in \calX}$ a family of probability measures on $\boX$.
\end{definition}

The expression \eqref{eq:independentIndividuals} of Definition~\ref{def:independentIndividuals} is a convolution of measures based on the operation of creating a function in $\boX^{\calX}$ out of a value in $\boX$ for each the individuals in $\calX$. This notion of independence will be useful as an example of concepts and operations that will be defined in the general case.

The notion of weak indistinguishability that was introduced in Section~\ref{sec:generalPopRep} has not yet been translated into practical terms. As opposed to strongly indistinguishable individuals that are bound through the events in $\calF_{\calY}$, it just happens that there is no specific knowledge about weakly indistinguishable individuals. As a result, weak indistinguishability is a fully probabilistic concept. In order to formally define it, we introduce a mapping $T_{\sigma}$ from $\boF_{\calY}$ into itself for any given $\sigma \in \Sym(\calX)$ defined by
\eqnl{eq:mappingWeakIndist}{
T_{\sigma}: f \mapsto f \circ \sigma.
}
Mappings of this form describe the changes induced by swapping individuals. It is therefore suitable for expressing properties of symmetry for probability measures as in the following definition, where, once again, $\bsPi(\calX)$ is the set of equivalence relations on $\calX$ and where $\xi_*P$ denotes the pushforward of a given probability measure $P$ on a measurable space $(\boE,\calE)$ by a measurable mapping $\xi$ from $(\boE,\calE)$ to another measurable space  $(\boE',\calE')$, i.e.\ $\xi_*P(A) = P(\xi^{-1}(A))$ for any $A \in \calE'$.

\begin{definition}
\label{def:weakIndist}
Let $P$ be a probability measure on $\boF_{\calY}$. The relation of weak indistinguishability induced by $P$ on $\calX$ is defined as
\eqns{
\eta = \sup \big\{\eta' \in \bsPi(\calX) \st \sfrall{\forall \sigma \in \Sym(\calX,\eta')} P = (T_{\sigma})_*P \big\}.
}
\end{definition}

The relation of weak indistinguishability is an equivalence relation by definition. Since $\bsPi(\calX)$ is only a partially ordered set, the greatest element of a given subset might not exist, but it is necessarily unique if it exists. We can show that the relation $\eta$ of weak indistinguishability exists by verifying that any element $\eta' \neq \eta$ in the considered subset can only identify less symmetries than $\eta$. In other words, denoting $\Pi(\eta)$ the partition of $\calX$ induced by $\eta$, there exist at least two subsets in $\Pi(\eta')$ which union is a subset of $\Pi(\eta)$ so that $\Pi(\eta') \leq \Pi(\eta)$ holds for any $\eta'$ in the subset of $\bsPi(\calX)$ of interest. Some of the properties of the relation of weak indistinguishability are given here using the notations of Definition~\ref{def:weakIndist}.

\begin{proposition}
It holds that $\eta \geq \tau$.
\end{proposition}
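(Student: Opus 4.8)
The plan is to show that $\tau$ is itself a member of the set of equivalence relations over which the supremum in Definition~\ref{def:weakIndist} is taken, i.e.\ that $P = (T_\sigma)_* P$ holds for every $\sigma \in \Sym(\calX,\tau)$. Since $\eta$ is by definition the supremum of that set, it is in particular an upper bound for it, so membership of $\tau$ yields $\eta \geq \tau$ immediately, with no further comparison of partitions required.

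To establish the required invariance, I would first observe that for any $\sigma \in \Sym(\calX,\tau)$ the map $T_\sigma$ sends each $f$ to an element of its own $\rho$-class. Indeed $T_\sigma(f) = f \circ \sigma$, and taking the witnessing permutation in \eqref{eq:inducedEqu} to be $\sigma$ itself shows that $(f\circ\sigma)\,\rho\, f$; hence $\xi_{\rho} \circ T_\sigma = \xi_{\rho}$. Since $T_\sigma$ is a bijection on $\boF_{\calY}$ with inverse $T_{\sigma^{-1}}$, and both $T_\sigma$ and $T_{\sigma^{-1}}$ preserve $\rho$-classes, $T_\sigma$ restricts to a bijection of each $\rho$-class onto itself. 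Consequently $T_\sigma^{-1}[F] = F$ for every $\rho$-saturated set $F$, that is, every $F$ which is a union of $\rho$-classes.

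It then remains to check that every $F \in \calF_{\calY}$ is $\rho$-saturated. This follows from the construction of $\calF_{\calY}$ as the Borel $\sigma$-algebra of the initial topology $\calT_{\calY}$ induced by $\xi_{\rho}$: the open sets of $\calT_{\calY}$ are of the form $\xi_{\rho}^{-1}[U]$ and are therefore unions of $\rho$-classes. The collection of $\rho$-saturated subsets of $\boF_{\calY}$ is closed under complementation and countable unions, hence is a $\sigma$-algebra; containing every $\calT_{\calY}$-open set, it contains all of $\calF_{\calY}$. Combining this with the previous step gives $(T_\sigma)_* P(F) = P(T_\sigma^{-1}[F]) = P(F)$ for all $F \in \calF_{\calY}$, i.e.\ $(T_\sigma)_* P = P$, for every $\sigma \in \Sym(\calX,\tau)$. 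This places $\tau$ in the candidate set and finishes the proof.

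The step I expect to be the main obstacle is the verification that measurability in $\calF_{\calY}$ forces $\rho$-saturation; everything downstream of it is formal. This is precisely where the design of $\calF_{\calY}$ as the Borel $\sigma$-algebra of the initial topology induced by $\xi_{\rho}$ (rather than of $\calT^*_{\calY}$) does the real work, encoding the property that events in $\calF_{\calY}$ cannot separate strongly indistinguishable individuals.
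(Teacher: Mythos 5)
Your proof is correct and follows essentially the same route as the paper's: both show that $P = (T_{\sigma})_*P$ holds for every $\sigma \in \Sym(\calX,\tau)$ — so that $\tau$ itself belongs to the set whose supremum defines $\eta$ — by exploiting the fact that events in $\calF_{\calY}$ cannot separate functions lying in the same $\rho$-class. The only difference is one of rigour rather than of strategy: where the paper asserts this invariance ``by construction,'' you make it precise by noting that the $\rho$-saturated sets form a $\sigma$-algebra containing the $\calT_{\calY}$-open sets $\xi_{\rho}^{-1}[U]$, hence all of $\calF_{\calY}$, which is a careful elaboration of the same argument.
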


\begin{proof}
Sets in the $\sigma$-algebra $\calF_{\calY}$ of subsets of $\boF_{\calY}$ do not allow for distinguishing individuals related by $\tau$. Thus, for any given $\sigma \in \Sym(\calX,\tau)$ and $F \in \calF_{\calY}$, it holds that $f \circ \sigma \in F$ for any $f \in F$ so that $P = (T_{\sigma})_*P$ is always true by construction. As a result, the equivalence relation $\tau$ is always in the set of which $\eta$ is the greatest element.
\end{proof}

\begin{example}
Reusing the notations of Definition~\ref{def:weakIndist} and assuming that the individuals in $\calX$ are independent under $P$ and that $\eta$ is the relation of weak indistinguishability induced by $P$, then for any pair $(x,x')$ of individuals in $\calX$, it holds that
\eqns{
(x \eta x') \iff (p_x = p_{x'}).
}
In multi-target tracking applications, it is common to assume that the random variables characterising the targets of interest are conditionally independent (they have independent dynamics and they are independently observed); in this context, targets that have the same distribution are weakly indistinguishable.
\end{example}

The representation of strongly indistinguishable individuals by random variables on $(\boF_{\calY}, \calF_{\calY})$ can be considered as satisfactory. Yet, the true population $\calY$ was supposed to be known so far, even though it is only used as an indexing set, this cannot be assumed in general. It is thus necessary to find a way of dealing with unknown populations.

\subsection{Stochastic representation}
\label{ssec:stochasticRepresentation}

It is natural to reuse the same mechanisms as before to bypass the necessity of knowing the true population when describing it, i.e.\ by defining an appropriate equivalence relation and working on the $\sigma$-algebras induced by the corresponding quotient spaces. However, we will see that the approach that seems the most natural at first does not lead to a satisfactory result. Nonetheless, this approach is detailed below as it motivates the introduction of a more advanced construction.

In order to define formally which features of the elements in $\bscalX$ and $\bscalY$ we want to simplify and which ones we want to retain, we introduce a binary relation on each of these sets as in the following definition.

\begin{definition}
\label{def:equivStrongDist}
Let $\calY,\calY' \in \bscalY$ be two populations equipped with a relation of strong indistinguishability defined via $\calY \defeq (\calX,\tau)$ and $\calY' \defeq (\calX',\tau')$. The binary relations $\sim$ on $\bscalX$ and $\eqY$ on $\bscalY$ are defined as follows
\eqns{
\calX \sim \calX' \Liff |\calX| = |\calX'| \AND \calY \eqY \calY' \Liff \exists \nu : \calY \relpres\longbij \calY',
}
where $\relpres\longbij$ indicates a relation-preserving bijection\footnote{that is, a bijection $\nu$ between $\calX$ and $\calX'$ such that $\nu(x) \tau' \nu(x')$ holds if and only if $x \tau x'$ holds.}.
\end{definition}

It is easy to prove that the binary relations $\sim$ and $\eqY$ on the respective sets $\bscalX$ and $\bscalY$ are equivalence relations. Note that the relation $\sim$ on $\bscalX$ can be equivalently defined as
\eqns{
\calX \sim \calX' \Liff \exists \nu: \calX \bij \calX',
}
where $\bij$ indicates a bijection. This alternative definition highlights the parallel with the equivalence relation $\eqY$ also introduced in Definition~\ref{def:equivStrongDist}. The objective is then to introduce suitable tools to integrate these simplifications at the level of a general population representation.

\subsubsection{Naive attempt}

Since the objective is to extend the results of the previous section to the case where the population $\calY \in \bscalY$ is not known, the first step is to define the union of the sets $\boF_{\calY}$ over all possible populations as
\eqns{
\boF \defeq \bigcup_{\calY \in \bscalY } \boF_{\calY}.
}
Although $\boF_{\calY}$ can be shown to be a Polish space for a given $\calY \in \bscalY$, the set $\boF$ is an uncountable union of (disjoint) Polish spaces so that it is not a Polish space in general. As a consequence, the set $\boF$ is difficult to work with from the viewpoint of Probability Theory. 
In any case, considering a function $f$ in $\boF$ implicitly defines a population via the domain of $f$, so that $\boF$ is not directly useful. As before, we define an equivalence relation on $\boF$ in order to ignore some of the features of its elements.

\begin{definition}
For any $f \in \boF_{\calY}$ and any $f' \in \boF_{\calY'}$, let the binary relation $\bsrho^*$ on $\boF$ be defined as
\eqns{
f \bsrho^* f' \Liff \exists \nu : \calY \relpres\longbij \calY' \big( f = f' \circ \nu \big).
}
\end{definition}

It is also easy to prove that the binary relation $\bsrho^*$ on $\boF$ is an equivalence relation. Equivalence classes in $\boF/\bsrho^*$ do not allow for distinguishing functions with different domains when they have the same co-domain in $\boX$ as required. As before, an appropriate $\sigma$-algebra $\calF$ of subsets of $\boF$ can be deduced from the quotient space $\boF/\bsrho^*$. However, with this approach, the only way of distinguishing equivalence classes in $\calX/\tau$ is through their respective cardinality. In particular, all the functions with a given co-domain in $\boX$ and with a domain of the form $(\calX,O)$ will be equivalent for $\bsrho^*$ so that no individual can be distinguished in spite of the fact that they are all assumed weakly distinguishable. This aspect is illustrated in the following example.

\begin{example}
\label{ex:naive}
Considering, as in Example~\ref{ex:gluing}, a representative set $\boX = \{\bfx,\bfx'\}$ as a state space, assuming that $\bscalX = \{\{x,x'\} \st x,x' \in \bscalX_{\a}, x \neq x' \}$, i.e.\ that populations are made of exactly two individuals, and supposing that individuals are always distinguishable ($\tau = O$), we obtain that
\eqns{
\boF = \{ f: \{x,x'\} \to \{\bfx,\bfx'\} \st x,x' \in \bscalX_{\a} \ET f(x) \neq f(x') \}.
}
We can check that $f\bsrho^*f'$ holds for any $f,f' \in \boF$ (since all the functions in $\boF$ have the same co-domain), so that $\bsrho^* = I$ and $\boF/\bsrho^*$ is a singleton. It follows that the realisations for the individuals $x$ and $x'$ cannot be distinguished from a random variable on $(\boF,\calF)$.
\end{example}

We can still verify that the space $(\boF,\calF)$ is suitable in cases where all the individuals are strongly indistinguishable by showing the relation between the subset
\eqns{
\boF_I \defeq \bigcup_{ \calX \in \bscalX } \boF_{(\calX,I)}
}
of $\boF$ endowed with the $\sigma$-algebra $\calF_I$ induced by $\calF$ and the set $\boN(\boX)$ of integer-valued measures, or counting measures, on $\boX$ equipped with its Borel $\sigma$-algebra $\calN(\boX)$. Such a relation will ensure that random variables on $(\boF_I,\calF_I)$ will be equivalent to point processes on $\boX$ as expected. In the next theorem, $\dom(f)$ will denote the domain of a given function $f$.

\begin{theorem}
\label{thm:eqToPointProcesses}
The mapping $\xi$ defined as
\eqnsa{
\xi : \boF_I & \to \boN(\boX) \\
f & \mapsto \sum_{x \in \dom(f)} \delta_{f(x)},
}
is $\calF_I/\calN(\boX)$-bi-measurable.
\end{theorem}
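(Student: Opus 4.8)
The statement is best read as: the map $\xi$ is $\calF_I$-measurable and, in addition, carries $\calF_I$-measurable sets to $\calN(\boX)$-measurable sets (note $\xi$ is not onto $\boN(\boX)$, since every $\xi(f)$ is finite on $\boX^{\inx}$). First observe that $\xi$ is well defined into $\boN(\boX)$: for $f \in \boF_I$ the condition $|f^{-1}[\boX^{\inx}]| < \infty$ makes the non-$\psi$ part of $\sum_{x} \delta_{f(x)}$ a finite sum of Diracs in $\boX^{\inx}$, while the remaining mass sits at $\psi$ and is at most countable, so $\xi(f)$ is a genuine counting measure. Since $\xi(f)$ depends only on the multiset of values, $\xi$ is constant on $\bsrho^*$-classes and hence factors as $\xi = \bar\xi \circ q$ through the quotient map $q : \boF_I \to \boF_I/\bsrho^*$, with $\bar\xi : \boF_I/\bsrho^* \to \boN(\boX)$. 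I will use two generating facts throughout: (i) $\calN(\boX)$ is generated by the evaluation functionals $N \mapsto N(A)$ as $A$ ranges over bounded Borel subsets of $\boX$, the standard generator for spaces of counting measures \citep{Daley2003}; and (ii) $\calF_I$ is generated by the $\bsrho^*$-saturations of the cylinders $\{ f \st \forall x,\, f(x) \in A_x \}$ with $A_x = \{\psi\}$ for all but finitely many $x$, because the initial topology $\calT_I$ induced by $q$ consists precisely of the $\bsrho^*$-saturated members of $\calT^*$.

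For the \emph{forward measurability} I fix a Borel $A \subseteq \boX$ and note that $\xi(f)(A) = |\{ x \in \dom(f) \st f(x) \in A \}|$ is $\bsrho^*$-invariant. When $A$ is open, $\{ f \st \xi(f)(A) \geq k \}$ is, for each index set $\calX$, the union over all finite configurations placing $k$ of the individuals in $A$ (and all but finitely many at $\psi$) of basic open cylinders of the form in (ii); this union is $\bsrho^*$-saturated and $\calT^*$-open, hence lies in $\calF_I$. Passing from open $A$ to arbitrary Borel $A$ is routine, by a monotone-class argument carried out on each bounded region of $\boX^{\inx}$ (where $\xi(f)(A)$ is finite, so no $\infty-\infty$ arises), together with countable additivity and the separate elementary treatment of the isolated atom $\{\psi\}$. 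By (i), $\xi$ is therefore $\calF_I/\calN(\boX)$-measurable.

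Next I establish \emph{injectivity modulo $\bsrho^*$ and identify the image}. Suppose $\xi(f) = \xi(f')$ with $f \in \boF_{(\calX,I)}$ and $f' \in \boF_{(\calX',I)}$. The two counting measures then agree on every singleton, so for each value $v \in \boX$ the fibres $f^{-1}[\{v\}]$ and $(f')^{-1}[\{v\}]$ have equal cardinality; comparing total masses gives $|\calX| = |\calX'|$. Selecting a bijection between the fibres over each $v$ and taking their union yields a bijection $\nu : \calX \to \calX'$ with $f = f' \circ \nu$, automatically relation-preserving since both relations are the maximal relation $I$, whence $f \bsrho^* f'$. Thus $\bar\xi$ is injective, and its image is exactly $\{ N \in \boN(\boX) \st N(\boX^{\inx}) < \infty \}$, which is $\calN(\boX)$-measurable because $\boX^{\inx}$ is a countable increasing union of bounded Borel sets, making $N \mapsto N(\boX^{\inx})$ a measurable (supremum) function of $N$.

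Finally, for the \emph{reverse measurability} it suffices, by (ii) and the bijectivity of $\bar\xi$, to check that $\xi$ sends each generating saturated cylinder to a $\calN(\boX)$-measurable set: taking images under a bijection commutes with countable unions, intersections and relative complements, so $\bar\xi$ then carries all of $\calF_I$ into $\calN(\boX)$. The image of a saturated cylinder constrained by finitely many Borel sets $A_{x_1},\dots,A_{x_n}$ (all other coordinates forced to $\psi$) is the set of counting measures admitting a matching of $n$ of their atoms to $A_{x_1},\dots,A_{x_n}$, and by inclusion--exclusion this is a finite Boolean combination of sets $\{ N \st N(B) \geq m \}$, hence lies in $\calN(\boX)$; combined with the forward direction this yields bi-measurability. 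I expect this reverse step to be the main obstacle: one must express the $\xi$-image of the saturated cylinders purely through the evaluation functionals $N \mapsto N(B)$, and this requires careful bookkeeping both at the isolated point $\psi$, whose multiplicity $N(\{\psi\})$ may be infinite, and across the union over all countable index sets $\calX \in \bscalX$. A secondary, more technical point underlying (ii) is the purely topological lemma that the Borel $\sigma$-algebra of $\calT_I$ is generated by the saturated cylinders, which is precisely where the relationship $\calT_I \subseteq \calT^*$ must be invoked.
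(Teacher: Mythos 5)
Your proposal is correct in substance and shares the paper's overall two-sided architecture---forward measurability by pulling back the counting generators $\{\mu \in \boN(\boX) : \mu(B)=i\}$ and exploiting $\bsrho^*$-invariance, reverse measurability by computing images of a generating family of $\calF_I$---but the execution differs at the reverse step in a way worth spelling out. The paper avoids your matching argument entirely by a judicious choice of generators: it takes the sets $A_{\calX}$ of functions sending exactly the individuals of a fixed $\calX$ into $B$ and every other individual outside $B$, whose $\bsrho^*$-saturations are precisely $\{f \in \boF_I : \sum_{x\in\dom(f)} \ind{B}(f(x)) = |\calX|\}$, so that the image under $\xi$ is immediately the generator $\{\mu : \mu(B)=|\calX|\}$ and no combinatorics is needed. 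You instead use the cylinders naturally attached to the definition of $\calT_I$ and must then do the Hall-type/inclusion--exclusion bookkeeping (and handle the atom at $\psi$) to express their images through evaluation functionals; this is heavier, but your generation claim is closer to the actual definition of the initial topology, whereas the paper's claim that the saturated $A_{\calX}$ generate $\calF_I$ is asserted without proof and is essentially equivalent to the theorem itself. Your route also buys rigor at two points the paper silently skips: first, you prove injectivity of the induced map on $\boF_I/\bsrho^*$ and identify its image, which is genuinely needed to propagate measurability of images from a generating family to the whole $\sigma$-algebra (the class of sets with measurable image is closed under unions but not under complements or intersections unless the map is injective on the quotient; the paper's concluding remark about an isomorphism presupposes this but its proof never uses it); second, in the forward direction the paper only checks that $\xi^{-1}[C]$ is $\bsrho^*$-saturated, and saturation alone does not place a set in $\calF_I$---your open-set-plus-monotone-class argument supplies exactly the missing step. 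The residual gaps you flag (the topological lemma behind your generating fact, the bookkeeping at $\psi$ where $N(\{\psi\})$ may be infinite, and the union over uncountably many index sets) are real but are no worse than the corresponding elisions in the paper's own proof.
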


\begin{proof}
We show that $\xi$ is measurable and then that $\xi[C] \in \calN(\boX)$ for any $C \in \calF_I$: 
\begin{enumerate}[label=\roman*.]
\item \label{proof:eqToPointProcesses:it:preimage} A generating family for the $\sigma$-algebra $\calN(\boX)$ of subsets of $\boN(\boX)$ is found to be made of subsets of the form
\eqns{
C = \{ \mu \in \boN(\boX) \st \mu(B) = i \},
}
for some $B \in \calB(\boX)$ and some $i \in \bbN$. The inverse image of $C$ by the mapping $\xi$ is of the form
\eqns{
\xi^{-1}[C] = \bigg\{ f \in \boF_I \st \sum_{x \in \dom(f)} \ind{B}(f(x)) = i \bigg\}.
}
To verify that $\xi^{-1}[C] \in \calF_I$, we check that
\eqns{
\frall{\forall f \in \xi^{-1}[C]\ET \forall f' \in \boF_I} f \bsrho^* f' \implies f' \in \xi^{-1}[C].
}
By definition we have that
\eqns{
f \bsrho^* f' \Liff \exists \nu : \dom(f) \bij \dom(f') \big( f = f' \circ \nu \big).
}
so that
\eqns{
\sum_{x \in \dom(f)} \ind{B}(f(x)) = \sum_{x \in \dom(f)} \ind{B}(f'(\nu(x))) = \sum_{x \in \dom(f')} \ind{B}(f'(x)) = i,
}
and $f' \in \xi^{-1}[C]$ as required.
\item \label{proof:eqToPointProcesses:it:image} To identify a generating family for the $\sigma$-algebra $\calF_I$, consider a subset of the form
\eqns{
A_{\calX} = \{ f \in \boF_I \st \dom(f) \supseteq \calX \ET \forall x \in \dom(f) (x \in \calX \iff f(x) \in B) \},
}
for some $\calX \in \bscalX$ and a Borel subset $B$ of $\boX$, which includes all the functions based on populations having $\calX$ as a sub-population that maps the individual in $\calX$ into $B$ and all the other individuals outside of $B$. Then, enlarge the subset $A_{\calX}$ by all the functions that are related by $\bsrho^*$ to any function in it, that is
\eqns{
C = \bigcup_{f \in A_{\calX}} [f] = \{ f \in \boF_I \st \exists X \subseteq \dom(f) ( \exists \nu : \calX \bij X ( f \in A_{\nu[\calX]} ))\}
}
which, denoting $i \defeq |\calX|$, can also be expressed as
\eqns{
C = \bigg\{ f \in \boF_I \st \sum_{x \in \dom(f)} \ind{B}(f(x)) = i \bigg\}.
}
It follows easily that
\eqns{
\xi[C] = \{ \mu \in \boN(\boX) \st \mu(B) = i \} \in \calN(\boX).
}
\end{enumerate}
We conclude from \ref{proof:eqToPointProcesses:it:preimage} and \ref{proof:eqToPointProcesses:it:image} that $\xi$ is bi-measurable.
\end{proof}

Theorem~\ref{thm:eqToPointProcesses} shows that a stochastic population where all individuals are strongly indistinguishable is essentially equivalent to a point process; for instance, random variables on $(\boF,\calF)$ with $\boF$ as in Example~\ref{ex:naive} can be seen as equivalent to the counting measure $\delta_{\bfx}+\delta_{\bfx'}$. To obtain the full equivalence would require to define $\xi$ on $\boF_I/\bsrho^*$, in which case it would become an isomorphism. 

The objective is to be able to represent partially-distinguishable populations and therefore events about specific individuals should also be in the $\sigma$-algebra $\calF$. We conclude that random variables on $(\boF,\calF)$ are not suitable in general for our purpose.

\subsubsection{Second attempt}

The approach described in the previous section was incorrectly forcing some weakly distinguishable individuals to be weakly indistinguishable. Since weak indistinguishability is a probabilistic concept, an alternative is to work directly on the set
\eqns{
\boP_{\boF} \defeq \bigcup_{ \calY \in \bscalY } \boP(\boF_{\calY}, \calF_{\calY}),
}
where $\boP(\boE,\calE)$ denotes the set of probability measures on a given measurable space $(\boE, \calE)$. It is then possible to simplify the set $\boP_{\boF}$ while preserving the relations of indistinguishability between individuals. For any $\calY$ and $\calY'$ in $\bscalY$ and any bijection $\nu$ between $\calY$ and $\calY'$, we introduce the mapping $T_{\nu} : \boF_{\calY'} \to \boF_{\calY}$ defined by
\eqns{
T_{\nu}: f \mapsto f \circ \nu.
}
The mapping defined in \eqref{eq:mappingWeakIndist} can be seen as a special case when $\calY = \calY'$.

\begin{definition}
\label{def:equivStrongDist2}
For any populations $\calY,\calY' \in \bscalY$, any $P \in \boP(\boF_{\calY}, \calF_{\calY})$ and any $P' \in \boP(\boF_{\calY'}, \calF_{\calY'})$, let the binary relation $\bsrho$ on $\boP_{\boF}$ be defined as
\eqnl{def:equivStrongDist2:eq:rhoBar}{
P \bsrho P' \Liff \exists \nu : \calY \relpres\longbij \calY' \big( P = (T_{\nu})_*P'\big).
}
\end{definition}

Since each probability measure $P$ in $\boP_{\boF}$ is defined on a single population in~$\bscalY$, the latter can be recovered and will be denoted $\calY_P$ or $(\calX_P,\tau_P)$. If individuals are independent under a given probability measure $P \in \boP_\boF$ then the equivalence class $[P]$ of probability measures related to $P$ via $\bsrho$ is found to be
\eqns{
[P] = \big\{ P' \st \exists \nu : \calY_P \relpres\longbij \calY_{P'} \big( \forall x \in \calX_P ( p_x = p'_{\nu(x)} )  \big) \big\}.
}
This result highlights the structure of the equivalence relation $\bsrho$ and of the mapping $T_{\nu}$ in \eqref{def:equivStrongDist2:eq:rhoBar}.

A given equivalence class in $\boP_{\boF}/\bsrho$ allows for describing the randomness of a population of a given size and structure without knowing the actual population state in $\bscalX$ as required. Such an equivalence class is referred to as a \emph{population representation}. 

Note that the definition of $\bsrho$ does not depend on the relation $\eta$ of weak indistinguishability. Indeed, weak indistinguishability is more an observed property of a population representation rather than a building block that would impose some sort of structure on the mathematical construction of it.

\begin{example}
Considering again the case of Example~\ref{ex:naive} it follows that
\eqns{
\boP_{\boF} = \big\{ P \in \boP\big(\boF_{(\{x,y\},O)}\big) \st x,y \in \bscalX_{\a} \big\}.
}
Focusing on the subset $\boP^*_{\boF}$ of $\boP_{\boF}$ for which individuals are weakly distinguishable and independent, for the sake of simplicity, we find that
\eqns{
P \bsrho P' \iff \exists \nu : \{x,y\} \bij \{x',y'\} \big( (p_x = p'_{\nu(x)}) \lwedge (p_y = p'_{\nu(y)}) \big),
}
for any $P,P' \in \boP^*_{\boF}$, where $\calX_P = \{x,y\}$ and $\calX_{P'} = \{x',y'\}$. In this setup, a point in $\boP^*_{\boF}/\bsrho$, which is in fact an equivalence class, correspond to the configuration where the uncertainty about one individual is described by a given probability distribution $p$ and the uncertainty about the other individual is described by a given probability distribution $p'$, the individuals being weakly indistinguishable if $p = p'$. In other words, individuals are labelled by the probability measures describing the uncertainty about them, these labels being shared by indistinguishable individuals by definition.
\end{example}

The set $\boP_{\boF}/\bsrho$ is not, however, a full answer to the question of the representation of populations since elements of it correspond to a given size and a given structure, i.e.\ a given type of strong indistinguishability. Yet the size and structure of a population are generally unknown and possibly random, and there may be second-order uncertainties on the probability measures in $\boP_{\boF}$ themselves. In general there are many possible distinct configurations for each given cardinality and structure, as illustrated in the following example. 

\begin{example}
In the case of multi-target tracking, the number of targets to be estimated is not generally known in advance, which corresponds to uncertainty on the cardinality of $\calX$ with the considered modelling. For a given cardinality, there are in general several possible configurations based on the assumed \emph{data association}, i.e.\ on the previous observation-to-track decisions. In this context, an element $P$ of $\boP_{\boF}$ corresponds to the distribution of targets on $\boX$ given the number of targets and given the data association, and one way to model the uncertainty on these quantities is to consider a random element of $\boP_{\boF}$.

\end{example}

In order to define random variables on $\boP_{\boF}$, this set also has to be endowed with a suitable $\sigma$-algebra. We follow the same approach as before and consider the initial topology induced by the quotient map of $\bsrho$ and we denote $\calP_{\boF}$ the corresponding Borel $\sigma$-algebra. There is no natural reference measure on $\calP_{\boF}$, but we assume that such a measure is given case by case via a countable subset or a parametric family of probability measures. Similarly, the $\sigma$-algebra on $\bscalY$ is assumed to be induced by the discrete topology on $\bscalY\SeqY$.

A random variable $\frakM$ on $(\boP_{\boF},\calP_{\boF})$ describes all the uncertainties about the system of interest and is referred to as a \emph{stochastic representation}. The interpretation of $\frakM$ can be made easier by separating its law $M \defeq \frakM_*\bbP$ into a marginal and a conditional as 
\eqns{
\frall{\forall B \in \calP_{\boF}} M(B) = \bbE\big[ M(B \given \frakY) \big],
}
where $\frakY$ is the random population induced by $\frakM$ on $\bscalY$, and $M(\cdot \given \frakY)$ is a version of the conditional law of $\frakM$ given $\frakY$, i.e.\ the probability measure representing the second-order uncertainties given the size and structure of the population. This separation of the randomness is straightforward, though it helps to interpret the behaviour of $\frakM$: first a size and a structure $[\calY] \in \bscalY\SeqY$ is randomly selected for the population, then a probability measure on $\boF_{\calY}$ is drawn, where $\calY$ is any element of $[\calY]$, describing the uncertainty about the considered type of population and ensuring that there is no specific knowledge about strongly indistinguishable individuals. Which population has been chosen from $[\calY]$ is irrelevant since the mapping $\calY \mapsto M(B \given \frakY = \calY)$ has to be measurable for any $B \in \calP_{\boF}$.

\begin{remark}
The probability $\bbP\big(\frakY \in [(\calX,\tau)] \big)$ only depends on the size of $\calX$ and on the size of the subsets in $\calX/\tau$. For instance, we can evaluate the probability for a realisation $\calY$ of $\frakY$ to contain exactly 3 strongly indistinguishable individuals and 2 weakly distinguishable ones, however, we cannot assess the probability of any event regarding the states of these individuals in $\bscalX_{\a}$. 
\end{remark}

\subsection{Statistics}

The stochastic representation $\frakM$ is a random element of $\boP_{\boF}$ inducing a random size and a random structure via the randomly selected probability distributions. Because the realisations of $\frakM$ are probability measures on different spaces, these realisations are not directly summable, yet statistics for some aspects of $\frakM$ can be defined. In order to assess events regarding the state of some or all of the individuals in a population, it is convenient to have appropriate mappings that can simplify the structure of the elements of $\boF_{\calY}$.

\begin{proposition}
\label{res:mappingMoments}
Let $\calY = (\calX,\tau) \in \bscalY$ and let $X$ be a subset of $\calX$, then the mapping $t_X$ defined as
\eqnmla{eq:mappingMoments}{
t_X : \boF_{\calY} & \to \boN(\boX) \\
f & \mapsto \sum_{x \in X} \delta_{f(x)},
}
is $\calF_{\calY}/\calN(\boX)$-measurable if and only if $X$ is the union of elements of $\calX/\tau$.
\end{proposition}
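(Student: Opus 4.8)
The plan is to recast the hypothesis on $X$ and then reduce the whole statement to a single structural fact. First, ``$X$ is an element of a partition coarser than $\calX/\tau$'' is the same as ``$X$ is a union of equivalence classes of $\tau$'', which is in turn equivalent to $\sigma(X)=X$ for every $\sigma \in \Sym(\calX,\tau)$: if $x \in X$, $x\tau x'$ and $x' \notin X$ then the transposition of $x$ and $x'$ lies in $\Sym(\calX,\tau)$ and moves $X$, whereas if $X$ is a union of $\tau$-classes then $\sigma(x)\tau x$ gives $\sigma(X)\subseteq X$ and, applying this to $\sigma^{-1}\in\Sym(\calX,\tau)$, also $X\subseteq\sigma(X)$. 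The structural fact I would use on both sides is that every member of $\calF_{\calY}$ is $\rho$-saturated: since $\calF_{\calY}$ is the Borel $\sigma$-algebra of the initial topology $\calT_{\calY}$ induced by $\xi_\rho$, each set in $\calT_{\calY}$ is a preimage under $\xi_\rho$ of a subset of $\boF_{\calY}/\rho$ and hence $\rho$-saturated, and the $\rho$-saturated subsets form a $\sigma$-algebra containing $\calT_{\calY}$.

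For necessity I argue the contrapositive. Suppose $X$ is not $\tau$-saturated and pick $x \in X$, $x' \in \calX \setminus X$ with $x\tau x'$. The transposition $\sigma=(x\,x')$ lies in $\Sym(\calX,\tau)$, so $f \rho (f\circ\sigma)$ for every $f \in \boF_{\calY}$. Choose $f$ with $f(x)$ and $f(x')$ two distinct points of $\boX^{\inx}$ and $f(y)=\psi$ for all other $y$; among the indices of $X$ only $x$ is moved by $\sigma$, from the charged point $f(x)$ to $f(x')$. Taking a Borel neighbourhood $B\subseteq\boX^{\inx}$ of $f(x)$ avoiding $f(x')$ gives $T_X(f)(B)=1$ and $T_X(f\circ\sigma)(B)=0$, so the generator $C=\{\mu \st \mu(B)=1\}$ of $\calN(\boX)$ satisfies $f \in T_X^{-1}[C]$ but $f\circ\sigma \notin T_X^{-1}[C]$. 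Hence $T_X^{-1}[C]$ is not $\rho$-saturated and therefore not in $\calF_{\calY}$, so $T_X$ is not measurable.

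For sufficiency, assume $X$ is $\tau$-saturated, so $\sigma(X)=X$ for all $\sigma \in \Sym(\calX,\tau)$. Then $T_X(f\circ\sigma)=\sum_{x\in X}\delta_{f(\sigma(x))}=\sum_{x\in X}\delta_{f(x)}=T_X(f)$ after reindexing along the bijection $\sigma|_X$ of $X$, so $T_X$ is $\rho$-invariant and factors as $T_X=\tilde T_X\circ\xi_\rho$. To establish measurability it suffices to treat the generators $C=\{\mu \st \mu(B)=i\}$ of $\calN(\boX)$ used in Theorem~\ref{thm:eqToPointProcesses}. Here $T_X^{-1}[C]=\{f\in\boF_{\calY}\st |\{x\in X\st f(x)\in B\}|=i\}$, which is $\rho$-saturated by the invariance just shown and Borel for $\calT^*_{\calY}$ exactly as in part~\ref{proof:eqToPointProcesses:it:preimage} of Theorem~\ref{thm:eqToPointProcesses}. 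I would then place it in $\calF_{\calY}$ by reproducing part~\ref{proof:eqToPointProcesses:it:image} of that proof, writing it through the saturated counting cylinders that generate $\calF_{\calY}$, the $\tau$-saturation of $X$ being precisely what makes $|\{x\in X\st f(x)\in B\}|$ a function of the $\rho$-class of $f$ alone.

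The main obstacle is this last measurability step: passing from ``$T_X^{-1}[C]$ is a $\rho$-saturated member of $\calF^*_{\calY}$'' to ``$T_X^{-1}[C]\in\calF_{\calY}$''. Saturation by itself does not generally place a fine-Borel set in the coarser $\sigma$-algebra, so rather than invoking an abstract identification of $\rho$-saturated $\calF^*_{\calY}$-sets with $\calF_{\calY}$, I would mirror the explicit generator computation of Theorem~\ref{thm:eqToPointProcesses}, which exhibits the saturated counting sets directly as elements of $\calF_{\calY}$; the invariance $T_X=\tilde T_X\circ\xi_\rho$ guarantees that the computation carries over verbatim once $X$ is $\tau$-saturated.
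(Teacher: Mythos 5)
Your proposal is correct and takes essentially the same route as the paper's own proof: both reduce to the generators $C=\{\mu : \mu(B)=i\}$ of $\calN(\boX)$, identify measurability of $T_X$ with $\rho$-saturation of the sets $T_X^{-1}[C]$, and translate that saturation into $\sigma[X]=X$ for every $\sigma\in\Sym(\calX,\tau)$, i.e.\ into $X$ being a union of $\tau$-classes. The only difference is one of explicitness: where the paper simply asserts the equivalence ``measurable iff the preimages of generators are saturated'', you prove the easy half (every set in $\calF_{\calY}$ is $\rho$-saturated, giving necessity via a concrete non-saturated counterexample) and candidly flag the converse half---that saturated counting sets actually belong to $\calF_{\calY}$---as the step requiring the generator computation of Theorem~\ref{thm:eqToPointProcesses}, a step the paper leaves equally implicit.
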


Proposition~\ref{res:mappingMoments} allows for studying the law $(t_X)_* P$ for any $P \in \boP(\boF_{\calY},\calF_{\calY})$, $\calY \in \bscalY$, which is a point-process distribution.

\begin{proof}
As mentioned before, the $\sigma$-algebra $\calN(\boX)$ is generated by subsets of the form $C = \{ \mu \in \boN(\boX) \st \mu(B) = i\}$, for some $B \in \calB(\boX)$ and some $i \in \bbN$, and it holds that
\eqns{
t_X^{-1}[C] = \bigg\{ f \in \boF_{\calY} \st \sum_{x \in X} \ind{B}(f(x)) = i \bigg\}.
}
The mapping $t_X$ is measurable if and only if
\eqns{
\frall{\forall f \in t_X^{-1}[C]\ET \forall f' \in \boF_{\calY}} f \rho f' \implies f' \in t_X^{-1}[C],
}
which is equivalent to
\eqns{
\frall{\forall f \in t_X^{-1}[C]\ET \forall \sigma \in \Sym(\calX,\tau)} \sum_{x \in X} \ind{B}(f(\sigma(x))) = i.
}
This last statement holds if and only if $\sigma[X] = X$ for all $\sigma \in \Sym(\calX,\tau)$, i.e.\ if and only if there exists partition of $\calX$ containing $X$ and being coarser than~$\calX/\tau$. The condition on $X$ in the proof of the proposition follows easily.
\end{proof}

We consider a few increasingly sophisticated examples here:
\begin{itemize}
\item The expected number of individuals and the expected number of strongly indistinguishable sub-populations are respectively $\bbE[ N(\frakM) ]$ and $\bbE[ \tilde{N}(\frakM) ]$ with, for any $\calY = (\calX,\tau) \in \bscalY$ and any $P \in \boP(\boF_{\calY},\calF_{\calY})$,
\eqns{
N(P) = |\calX| \AND \tilde{N}(P) = |\calX/\tau|.
}
\item The expected number of individuals within a subset $B \in \calB(\boX)$ is $\bbE[ N_B(\frakM) ]$ with
\eqns{
N_B(P) = \bbE[t_{\calX}(\frakF)(B)]
}
where $\frakF$ is any random variable with distribution $P$.
\item Assuming that all individuals represented by $\frakM$ are weakly distinguishable, the expected number of individuals with marginal law within a subset $C$ of $\boP(\boX, \calB(\boX))$, henceforth denoted $\boP(\boX)$ for compactness, is $\bbE[L_C(\frakM)]$ with, for any $\calY = (\calX,\tau) \in \bscalY$ and any distribution $P \in \boP(\boF_{\calY},\calF_{\calY})$,
\eqns{
L_C(P) = \sum_{x\in\calX} \ind{C}\big((\hat{t}_x)_*P\big)
}
where the mapping
\eqnsa{
\hat{t}_x : \boF_{\calY} & \to \boX \\
f & \mapsto f(x)
}
defined for any $x \in \calX$, can be easily proved to be $\calF_{\calY}/\calB(\boX)$-measurable (it can be seen as a special case of \eqref{eq:mappingMoments} with $X = \{x\}$ and $f(x)$ being used directly instead of $\delta_{f(x)}$). The quantity $\bbE[L_C(\frakM)]$ can be useful in practice, e.g.\ to compute the expected number of individuals 
\begin{enumerate*}
\item who are expected to be within some given subset of $\boX$,
\item who are more than $\alpha\%$ likely to be within some given subset of $\boX$ for a fixed $\alpha$, or
\item whose marginal law has a second moment/an entropy that is upper bounded by some fixed constant.
\end{enumerate*}
\end{itemize}

Many applications are concerned with the study of populations where the individuals are independent. The simplifications induced by such an assumption are important enough to justify studying this case specifically, and so is done in the next section.

\section{Alternative formulation}
\label{sec:altFormulations}

The objective now is to show that the problem can be formulated on more standard sets than $\boP_{\boF}$. We focus on one alternative formulation which relies on integer-valued measures, however, other formulations are possible, e.g.\ with product measures on suitably defined spaces. These types of formulation already exist for point processes as described in \cite{Moyal1962} and \cite{Ito2013}. The following assumption will henceforth be considered:
\begin{enumerate}[label=\bfseries A.\arabic*,series=hyp]
\item \label{hyp:independence} Individuals are independent.
\end{enumerate}
The subset of $\boP_{\boF}$ composed of probability measures for which all individuals are independent is denoted $\boP^*_{\boF}$ and is equipped with the $\sigma$-algebra $\calP^*_{\boF}$ induced by $\calP_{\boF}$. For a given $P \in \boP^*_{\boF}$, we denote $\calX_P \in \bscalX$ the population on which $P$ is based and $\{p_x\}_{x \in \calX_P}$ the corresponding family of individual probability distributions on $\boX$.

One of the most direct alternative formulations uses the concept of integer-valued measures or counting measures. A connection between the specific notion of population representation and the more common concept of counting measure is established in the following proposition. Since $\boP(\boX)$ is a Polish space when equipped with the topology induced by the Prokhorov metric \cite{Prokhorov1956}, the set $\boN(\boP(\boX))$ can also be made Polish \cite{Daley2003} and is therefore equipped with its Borel $\sigma$-algebra denoted $\calN(\boP(\boX))$. Also, the Borel $\sigma$-algebra of $\boP(\boX)$ is denoted by $\calP(\boX)$.

\begin{theorem}
\label{thm:popProcToMeasure}
The mapping  $\zeta : \boP^*_{\boF} \to \boN(\boP(\boX))$, defined as
\eqnl{eq:prop:popProcToMeasure}{
\zeta : P \mapsto \sum_{x \in \calX_P} \delta_{ p_x },
}
is $\calP^*_{\boF}/\calN(\boP(\boX))$-measurable.
\end{theorem}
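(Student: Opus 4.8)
The plan is to adapt the argument of Theorem~\ref{thm:eqToPointProcesses}, which applies here because the target is again a space of counting measures. Since $\boP(\boX)$ is Polish, the $\sigma$-algebra $\calN(\boP(\boX))$ is generated by the family of sets
\eqns{
C = \{ \mu \in \boN(\boP(\boX)) \st \mu(\calB) = i \},
}
indexed by $\calB \in \calP(\boX)$ and $i \in \bbN$, so it suffices to prove that $\zeta^{-1}[C] \in \calP^*_{\boF}$ for each such $C$. Writing $\{p_x\}_{x \in \calX_P}$ for the individual laws attached to an independent $P \in \boP^*_{\boF}$, the preimage is
\eqns{
\zeta^{-1}[C] = \big\{ P \in \boP^*_{\boF} \st \textstyle\sum_{x \in \calX_P} \ind{\calB}(p_x) = i \big\}.
}

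The decisive point is that $\zeta$ is constant on the equivalence classes of $\bsrho$, exactly as $\xi$ was shown constant on $\bsrho^*$-classes in part~\ref{proof:eqToPointProcesses:it:preimage} of the proof of Theorem~\ref{thm:eqToPointProcesses}. Indeed, the characterisation of the equivalence class $[P]$ of an independent measure given just after Definition~\ref{def:equivStrongDist2} states that $P \bsrho P'$ holds if and only if there is a relation-preserving bijection $\nu : \calY_P \relpres\longbij \calY_{P'}$ with $p_x = p'_{\nu(x)}$ for every $x \in \calX_P$. Such a $\nu$ matches each individual law to an equal one, so the families $\{p_x\}_{x\in\calX_P}$ and $\{p'_{x'}\}_{x'\in\calX_{P'}}$ coincide as multisets and $\zeta(P) = \zeta(P')$. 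Consequently $\zeta^{-1}[C]$ is a union of $\bsrho$-classes, i.e.\ it is saturated, which is the invariance needed for a Borel set to belong to the quotient-induced $\sigma$-algebra $\calP^*_{\boF}$.

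It then remains to confirm that $\zeta^{-1}[C]$ is Borel for the ambient topology underlying $\calP^*_{\boF}$. On each component $\boP(\boF_{\calY})$ with $\calY = (\calX,\tau)$, independence together with the descent of $\bigotimes_{x\in\calX} p_x$ to the coarse $\sigma$-algebra $\calF_{\calY}$ forces the individual laws to be constant on the blocks of $\calX/\tau$ (by the example following the proposition $\eta \ge \tau$, strongly indistinguishable individuals share a law). Grouping the sum by blocks gives
\eqns{
\sum_{x \in \calX} \ind{\calB}(p_x) = \sum_{X \in \calX/\tau} |X|\, \ind{\calB}(p_X),
}
where $p_X$ is the common law of the block $X$, recovered measurably from $P$ through the block map $T_X$ of \eqref{eq:mappingMoments}. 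Each term is measurable, the sum over the countable partition is measurable, and patching over $\boP_{\boF} = \bigcup_{\calY} \boP(\boF_{\calY})$ keeps the level set $\{=i\}$ Borel. Being both saturated and Borel, $\zeta^{-1}[C]$ lies in $\calP^*_{\boF}$, and the measurability of $\zeta$ follows.

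The main obstacle is this last step, and two points in it require care. First, a single $p_x$ is \emph{not} recoverable from $P$ when $x$ sits in a non-trivial block of $\tau$, since the evaluation $f \mapsto f(x)$ is then not $\calF_{\calY}$-measurable (by the proposition characterising the measurability of $T_X$); this is why the counting functional must be phrased through the per-block laws $p_X$ via the measurable maps $T_X$, the equality of laws inside blocks making the reduction unambiguous. Second, I must make precise the convention—already implicit for $\calF_I$ in Theorem~\ref{thm:eqToPointProcesses}—that the saturated ambient-Borel sets are exactly the members of $\calP^*_{\boF}$, so that verifying saturation and ambient Borel-ness genuinely certifies membership.
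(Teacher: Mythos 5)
Your proposal is correct and matches the paper's own argument: the paper likewise takes the generating sets $C = \{\mu \in \boN(\boP(\boX)) \st \mu(B) = i\}$, computes the preimage $\zeta^{-1}[C]$ as the level set of the counting functional $P \mapsto \sum_{x \in \calX_P} \ind{B}(p_x)$, and then certifies $\zeta^{-1}[C] \in \calP^*_{\boF}$ by ``following the same route as in the proof of Theorem~\ref{thm:eqToPointProcesses}'', which is exactly the saturation-under-$\bsrho$ argument you make explicit via the characterisation of $[P]$ for independent measures. Your final paragraph (block-wise recovery of the laws through $T_X$ and the ambient Borel/saturation convention) goes beyond what the paper itself verifies---its proof stops at saturation---so that extra care is a supplement to, not a deviation from, the published proof.
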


\begin{proof}
The Borel $\sigma$-algebra on $\boN(\boP(\boX))$ is the one generated by subsets of the form
\eqns{
C = \{ \mu \in \boN(\boP(\boX)) \st \mu(B) = i \},
}
for some $B \in \calB(\boP(\boX))$ and $i \in \bbN$. The inverse image of $C$ by $\zeta$ is found to be
\eqns{
\zeta^{-1}[C] = \bigg\{ P \in \boP_{\boF} \st \sum_{x \in \calX_P} \ind{B}(p_x) = i \bigg\},
}
where $\calX_P$ is the population on which $P$ is defined and $\{p_x\}_{x \in \calX_P}$ is the indexed family of probability measures on $\boX$ induced by $P$. Following the same route as in the proof of Theorem~\ref{thm:eqToPointProcesses}, we can verify that $\zeta^{-1}[C] \in \calP^*_{\boF}$.
\end{proof}

Theorem~\ref{thm:popProcToMeasure} shows that stochastic representations can be expressed as a random counting measure, or point process, on the set of probability measures on $\boX$. The transformation $\zeta$ introduced in this proposition does not preserve the representation of strong indistinguishability and is not bi-measurable as a consequence. This can be seen as beneficial in practice since the observability of strong indistinguishability is often not realistic. The only individuals that are known to be strongly indistinguishable in this case are the ones that are almost surely at the same point of the state space. 

\begin{remark}
It is possible to relax Assumption~\ref{hyp:independence} to: individuals that are not strongly indistinguishable are independent. In this case, the corresponding subset of stochastic representations could be mapped to $\boN(\boP(\boN(\boX), \calN(\boX)))$, that is, to the set of point processes on the space of probability distributions of point processes (the latter characterising sub-populations of strongly-indistinguishable individuals). In this configuration, the relation of strong indistinguishability can be preserved, but at the expense of a more complex set of counting measures.
\end{remark}

As a point process on $\boP(\boX)$, i.e.\ as a random variable on $(\boN(\boP(\boX)), \calN(\boP(\boX)))$, $\frakM$ can be characterised by its probability-generating functional (p.g.fl.) $G$, defined for any non-negative bounded measurable function $h$ on $\boP(\boX)$ as \citep{Daley2008}
\eqnsa{
G(h) & \defeq \bbE\bigg[ \exp \bigg( \int \log h(p) \frakM(\d p)  \bigg) \bigg] \\
& = c(0) + \sum_{n \geq 1} c(n) \int \prod_{i=1}^n h(p_i) P_n(\d(p_1,\dots,p_n)),
}
where $c \in \boP(\bbN)$ is defined as $c(n) = \bbP(\frakM(\boP(\boX)) = n)$ for any $n \geq 0$ and $P_n$ is the distribution of $\frakM$ on $\boP(\boX)^n$ conditioned on $\frakM(\boP(\boX)) = n$, for any $n > 0$.

The simplicity of this integer-valued measure formulation comes from the fact that the state space does not actually appear in the equations, thereby allowing for more flexibility in the expressed quantity.

\subsection{Parametrised family of probability measures}

A special case of interest is found when the support of the considered stochastic representations is within a family of probability measures parametrised by a set $\Theta \subseteq \bbR^{d_{\Theta}}$ for some $d_{\Theta} > 0$. This enables some of the properties of stochastic representations to be studied on the simpler set $\Theta$. The following additional assumption is henceforth considered:
\begin{enumerate}[resume*=hyp]
\item \label{hyp:parametrisedFamily} Stochastic representations take values in a parametrised family of probability measures.
\end{enumerate}
Under Assumption~\ref{hyp:parametrisedFamily}, let $\calS_{\Theta} = \{p_{\theta}\}_{\theta \in \Theta}$ be an identifiable family of probability measures on $\boX$ encompassing the support of $\frakM$. In this context, identifiability means that $p_{\theta} \neq p_{\theta'}$ whenever the parameters $\theta,\theta' \in \Theta$ are different. The point process $\frakM$ induces a point process $\frakN$ on $\Theta$ in the following way:
\eqns{
\frakN(B) = \frakM(T(B))
}
for any $B \in \calB(\Theta)$, where $T : \Theta \ni \theta \mapsto p_{\theta} \in \boP(\boX)$ is assumed to be bi-measurable. Straightforwardly, any point process $\frakN'$ on $\Theta$ induces a point process on $\boP(\boX)$ defined as $T_*\frakN'$. One of the consequences on this relation is the ability to recover statistics for $\frakM$ from the ones for $\frakN$, for instance the expected number of individual laws within the measurable subset $C$ of $\boP(\boX)$ can be recovered via
\eqns{
\bbE[\frakM(C)] = \bbE[\frakN(T^{-1}(C))].
}
The p.g.fl.\ of $\frakM$ can now be equivalently expressed as
\eqnsa{
G(h) & = \bbE\bigg[ \exp \bigg( \int \log h(p_{\theta}) \frakN(\d \theta)  \bigg) \bigg] \\
& = c(0) + \sum_{n \geq 1} c(n) \int \prod_{i=1}^n h(p_{\theta_i}) Q_n(\d(\theta_1,\dots,\theta_n)),
}
where $Q_n$ is the distribution of $\frakN$ on $\Theta^n$ conditioned on $\frakN(\Theta) = n$, for any $n > 0$. If the population under consideration is fully distinguishable almost surely then the point process $\frakM$ is simple and $Q$ admits a density w.r.t.\ the Lebesgue measure on $\Theta$.

\subsection{Discrete set of probability measures}

We also formulate an assumption that is of interest when devising practical estimation algorithms:
\begin{enumerate}[resume*=hyp]
\item \label{hyp:countableSupport} The set $\Theta$ is countable.
\end{enumerate}
As a consequence of Assumption~\ref{hyp:countableSupport}, the point process $\frakN$ induced by $\frakM$ is equivalent to a random variable $\bsN$ on the set $\bar\bbN^{\Theta}$, with $\bar\bbN = \bbN \cup \{+\infty\}$. Then $\frakM$ can be expressed as
\eqns{
\frakM = \sum_{\theta \in \Theta} \bsN_{\theta} \delta_{p_{\theta}}.
}
Note that $\bsN$ verifies $\bsN_{\theta} < \infty$ for any $\theta \in \Theta$ such that $p_{\theta} \neq \delta_{\psi}$. A realisation $\mu$ of $\frakM$ can be denoted $\mu_{\bsn}$ with $\bsn$ the corresponding realisation of $\bsN$ in order to underline the multiplicity of each atom in $\calS_{\Theta}$. The law $P$ of $\frakM$ on $\boN(\boP(\boX))$ can then be expressed as
\eqns{
P(B) = \int \ind{B}(\mu_{\bsn}) \bsc(\d\bsn)
}
for any Borel subset $B$ of $\boN(\boP(\boX))$, where $\bsc$ is the induced probability measure on~$\bar\bbN^{\Theta}$. In the context of multi-target tracking, each sequence of observation yields a potential individual distribution so that $\Theta$ would be defined as the set of all sequences of past observations.

\begin{example}
If a population is known to contain exactly $3$ individuals and if the only available probability distributions for these individuals are the ones in the set $\calS_{\Theta} = \{p_1,p_2\}$, in which case $\Theta = \{1,2\}$ and elements of $\bbN^{\Theta}$ can be seen as pairs of integers, then the population representation can be any of the following:
\eqns{
\mu_{3,0} = 3\delta_{p_1}, \quad \mu_{2,1} = 2\delta_{p_1}+\delta_{p_2}, \quad \mu_{1,2} = \delta_{p_1} + 2\delta_{p_2}, \quad \mu_{0,3} = 3\delta_{p_2}.
}
For instance, $\mu_{2,1}$ describes the case where the uncertainty about two of the individuals is described by $p_1$, so that these two individuals are indistinguishable, and the uncertainty about the other individual is described by $p_2$. In this form it is not known whether the two weakly indistinguishable individuals are also strongly indistinguishable or not. 
\end{example}

Identifying a countable family $\calS_{\Theta}$ of probability measure and additionally assuming that $\bsN_{\theta} < \infty$ even if $p_{\theta} = \delta_{\psi}$ enables a simplification of the expression of the p.g.fl.\ of $\frakM$ to
\eqns{
G(h) = \bbE\bigg[ \prod_{\theta \in \Theta} h(p_{\theta})^{\bsN_{\theta}} \bigg]  = \sum_{\bsn \in \bbN^{\Theta}} \bsc(\bsn) \prod_{\theta \in \Theta} h(p_{\theta})^{\bsn_{\theta}},
}
which is related to the probability-generating functional of $\frakN$ as expected. For instance, if $\Theta = \{1,\dots,k\}$, then $G(h) = G'(h(p_1),\dots,h(p_k))$, with $G'$ the probability-generating function of $\bsN$ defined as
\eqns{
G'(z_1,\dots,z_k) \defeq \sum_{\bsn \in \bbN^{\Theta}} \bsc(\bsn) z_1^{\bsn_1} \dots z_k^{\bsn_k}.
}
Assumption~\ref{hyp:countableSupport} also yields a simpler expression of the statistics induced by a stochastic representation $\frakM$ on $\boN(\boP(\boX))$. Of particular interest are the mean $M(B)$ and variance $V(B)$ for the number of individual laws within a measurable subset $C$ of $\boP(\boX)$, characterised by
\eqnsa{
M(C) & \defeq \bbE[ \frakM(C) ], \\
V(C) & \defeq \bbE[ \frakM(C)^2 ] - M(C)^2,
}
whenever they exist. These quantities are well defined since $\frakM$ is a random measure. If the quantities of interest are the mean and variance on the state space $\boX$, then the mapping
\eqnsa{
\Phi_B : \boP(\boX) & \to \bbR \\
p & \mapsto p(B),
}
can be introduced for any $B \in \calB(\boX)$ and is $\calP(\boX)/\calB(\bbR)$-measurable by \cite[Proposition~A2.5.IV]{Daley2003}. The \emph{collapsed} first moment $M'(B)$ and variance $V'(B)$, describing the number of individuals within $B \in \calB(\boX)$ can then be defined as \cite{Delande2016_DISP}
\eqnsa{
M'(B) & \defeq \bbE[ \frakM(\Phi_B) ] = \sum_{\theta \in \Theta} m_{\theta} p_{\theta}(B), \\
V'(B) & \defeq \bbE[ \frakM(\Phi_B)^2 ] - M'(B)^2 = \sum_{\theta,\theta' \in \Theta} \cov_{\theta,\theta'} p_{\theta}(B) p_{\theta'}(B).
}
where $\frakM(\Phi_B) \defeq \int \Phi_B(p) \frakM(\d p)$, $m_{\theta} \defeq \bbE[\bsN_{\theta}]$ and $\cov_{\theta,\theta'} \defeq \bbE[\bsN_{\theta} \bsN_{\theta'}] - \bbE[\bsN_{\theta}] \bbE[\bsN_{\theta'}]$. These relations between $\bsN$ and $\frakM$ are connected to the relation between the p.g.fl.\ $G$ and the probability-generating function $G'$. Even in the simple configuration induced by Assumption~\ref{hyp:countableSupport}, the structure of the proposed representation of stochastic populations enables more diverse types of statistics to be computed when compared to point processes on the state space, which is practically relevant for describing filtering algorithms for multi-object dynamical systems \cite{Delande2014_Var}.

\section*{Conclusion}

Starting from general considerations about the concepts of individual and population and about the partially-indistinguishable knowledge that may be available about them, we presented increasingly general notions in an attempt to faithfully describe the multi-faceted nature of the corresponding uncertainties. After a suitable level of generality was reached, an alternative way of expressing the uncertainty about these complex systems has been introduced. This alternative expression highlights the nature of the proposed representation by identifying it with a point process on the set of probability measures on the individual state space, under the assumption of independence between individuals. Future work includes the study of algorithms based on the introduced representation of populations as well as their theoretical analysis following, for instance, the approach of \cite{DelMoral2013,DelMoral2015}.

\newpage

\appendix

\section{Notation}
\label{sec:notation}

\vspace{-1em}
\small
\begin{itemize}[leftmargin=.12\columnwidth]
\item[$\bscalX_{\a}$:] \emph{Representative set}, i.e.\ set in which individuals are uniquely characterised
\item[$\bscalX$:] Set of all possible populations, i.e.\ set of all countable subsets of $\bscalX_{\a}$
\item[$\boX$:] State space, defined as the union of an Euclidean space $\boX^{\inx}$ and an isolated point~$\psi$
\item[$\bscalY$:] Set of \emph{structured populations}, i.e.\ populations equipped with a relation of strong indistinguishability
\item[$\calX$, $\calY$:] A given population (resp.\ structured population), i.e.\ an element of $\bscalX$ (resp.\ $\bscalY$)
\item[$x$, $\bfx$:] Elements of $\bscalX_{\a}$ and $\boX$ respectively
\item[$\tau, \eta$:] Equivalence relations of strong (resp.\ weak) indistinguishability
\item[$\boP(E,\calE)$:] Set of probability measures on the measurable space $(E,\calE)$
\item[$(\boF_{\calY},\calF_{\calY})$:] Measurable space of functions from $\calY$ to $\boX$ that do not allow for distinguishing strongly indistinguishable individuals
\item[$\boP_{\boF}$:] Union over all structured populations $\calY \in \bscalY$ of $\boP(\boF_{\calY},\calF_{\calY})$
\item[$\calP_{\boF}$:] $\sigma$-algebra on $\boP_{\boF}$ that do not allow for accessing the state of individuals in $\bscalX_{\a}$
\item[$\frakM$:] A \emph{stochastic representation}, i.e.\ a random variable on $(\boP_{\boF},\calP_{\boF})$ or a point process/random counting measure on $\boP(\boX)$
\end{itemize}

\bibliography{Thesis}

\end{document}